\newtheorem{theorem}{Theorem}
\newtheorem{corollary}[theorem]{Corollary}
\newenvironment{proof}[1][Proof]{\noindent\textbf{#1.} }{\ \rule{0.5em}{0.5em}}
\begin{document}

\author{Nazl\i\ Y\i ld\i z \.{I}kikarde\c{s}, Musa Demirci, G\"{o}khan
Soydan, \and \.{I}smail Naci Cang\"{u}l}
\title{THE GROUP STRUCTURE OF BACHET ELLIPTIC CURVES OVER FINITE FIELDS $%
\mathbb{F}_{p}$}
\date{}
\maketitle

\begin{abstract}
Bachet elliptic curves are the curves $y^{2}=x^{3}+a^{3}$ and in this work
the group structure $E(\mathbb{F}_{p})$ of these curves over finite fields $%
\mathbb{F}_{p}$ is considered. It is shown that there are two possible
structures $E(\mathbb{F}_{p})\cong C_{p+1}$ or $E(\mathbb{F}_{p})\cong
C_{n}\times C_{nm},$ \ for $m,n\in 
%TCIMACRO{\U{2115} }%
%BeginExpansion
\mathbb{N}
%EndExpansion
,$ according to $p\equiv 5~\left( \func{mod}6\right) $ and $p\equiv 1~\left( 
\func{mod}6\right) ,$ respectively. A result of Washington is restated in a
more specific way saying that if $E(\mathbb{F}_{p})\cong Z_{n}\times Z_{n},$
then $p\equiv 7~\left( \func{mod}12\right) $ and $p=n^{2}\mp n+1.$
\end{abstract}

\section{\protect\large Introduction}

$\footnote{\textit{AMS 2000 Subject Classification Number : }11G20, 14H25,
14K15, 14G99
\par
\textit{Keywords: Elliptic curves over finite fields, rational points}}$%
\footnote{%
This work was supported by the research fund of Uludag University project
no: F-2003/63 and F-2004/40}Let $p$ be a prime. We shall consider the
elliptic curves 
\begin{equation}
E:~y^{2}\equiv x^{3}+a^{3}~\left( \func{mod}p\right)
\end{equation}%
where $a$ is an element of $\mathbb{F}_{p}^{\ast }=\mathbb{F}_{p}-\left\{
0\right\} .$ Let us denote the group of the points on $E$ by $E\left( 
\mathbb{F}_{p}\right) .$

If $\mathbb{F}$ is a field, then an elliptic curve over $\mathbb{F}$ has,
after a change of variables, a form%
\begin{equation*}
y^{2}=x^{3}+Ax+B
\end{equation*}%
where $A$ and $B\in \mathbb{F}$ with $4A^{3}+27B^{2}\neq 0$ in $\mathbb{F}.$
Here $D=-16\left( 4A^{3}+27B^{2}\right) $ is called the discriminant of the
curve. Elliptic curves are studied over finite and infinite fields. Here we
take $\mathbb{F}$ to be a finite prime field $\mathbb{F}_{p}$ with
characteristic $p>3.~$Then $A,B\in \mathbb{F}_{p}$ and the set of points $%
\left( x,y\right) \in \mathbb{F}_{p}\times \mathbb{F}_{p},$ together with a 
\textit{point o at infinity} is called the set of $\mathbb{F}_{p}-$ \textit{%
rational points }of $E$ on $\mathbb{F}_{p}$ and is denoted by $E\left( 
\mathbb{F}_{p}\right) .$ $N_{p}$ denotes the number of rational points on
this curve. It must be finite.

In fact one expects to have at most $2p+1$ points (together with \textit{o)(}%
for every $x$, there exist a maximum of $2$ $y^{^{\prime }}$s)$.$ But not
all elements of $\mathbb{F}_{p}$ have square roots. In fact only half of the
elements of $\mathbb{F}_{p}$ have a square root. Therefore the expected
number is about $p+1.$

It is known that%
\begin{equation*}
N_{p}=p+1+\underset{x=0}{\overset{p-1}{\sum }}\chi \left( x^{3}+Ax+B\right) .
\end{equation*}%
Here we use the fact that the number of solutions of $y^{2}\equiv u\ \
\left( p\right) ~$is $1+\chi \left( u\right) .$

The following theorem of Hasse quantifies this result.

\begin{theorem}[Hasse 1922]
$N_{p}$ $<\left( \sqrt{p}+1\right) ^{2}.$
\end{theorem}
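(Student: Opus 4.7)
The plan is to prove Hasse's bound using the Frobenius endomorphism of $E$. Let $\phi : E \to E$ denote the map $\phi(x,y) = (x^{p}, y^{p})$, extended by $\phi(o) = o$. First I would verify that $\phi$ is a well-defined isogeny of $E$ of degree $p$, and that a point $P \in E(\overline{\mathbb{F}_p})$ is fixed by $\phi$ if and only if its coordinates lie in $\mathbb{F}_p$. Consequently $E(\mathbb{F}_p) = \ker(\phi - 1)$.

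The second step is to observe that the isogeny $\phi - 1$ is separable: since $\phi$ is purely inseparable, the induced map on tangent spaces is zero, so the differential of $\phi - 1$ agrees with that of $-1$, which is nonzero. Hence $\#\ker(\phi-1) = \deg(\phi - 1)$, and therefore $N_p = \deg(\phi - 1)$.

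The decisive ingredient is that the degree map $\deg : \mathrm{End}(E) \to \mathbb{Z}$ is a positive-definite quadratic form. Polarising it yields a symmetric bilinear pairing $\langle \cdot, \cdot \rangle$ on $\mathrm{End}(E)$ satisfying the Cauchy--Schwarz inequality $|\langle \alpha, \beta\rangle|^{2} \leq \deg(\alpha)\,\deg(\beta)$. Applying this with $\alpha = \phi$ and $\beta = 1$ gives
\[
\bigl| \deg(\phi - 1) - \deg(\phi) - \deg(1) \bigr| \leq 2\sqrt{\deg(\phi)\,\deg(1)} = 2\sqrt{p},
\]
so $|N_p - p - 1| \leq 2\sqrt{p}$. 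Rearranging yields $N_p \leq p + 1 + 2\sqrt{p} = (\sqrt{p}+1)^{2}$, and the inequality is strict because the left-hand side is a positive integer while $(\sqrt{p}+1)^{2} = p + 1 + 2\sqrt{p}$ is irrational for every prime $p$.

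I expect the main obstacle to be establishing the positive-definiteness of $\deg$ on $\mathrm{End}(E)$. The standard route is to construct the dual isogeny $\hat{\alpha}$ satisfying $\alpha \hat{\alpha} = [\deg \alpha]$, from which bilinearity of $(\alpha,\beta) \mapsto \deg(\alpha + \beta) - \deg(\alpha) - \deg(\beta)$ can be deduced; alternatively, one can argue via the Weil pairing on $\ell$-torsion for a prime $\ell \neq p$. Once this structural fact is in hand, the bound above is a purely formal consequence, and no further information about the specific coefficients $A,B$ of the curve is required.
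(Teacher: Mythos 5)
The paper offers no proof of this statement at all: Hasse's theorem is quoted as a known classical result (it appears, for instance, in Washington's book, which the paper cites), so there is no internal argument to compare yours against. Your proposal is the standard textbook proof and it is correct. The chain $E(\mathbb{F}_p)=\ker(\phi-1)$, separability of $\phi-1$ via the vanishing differential of Frobenius, hence $N_p=\deg(\phi-1)$, and then Cauchy--Schwarz for the positive-definite quadratic form $\deg$ on $\mathrm{End}(E)$ is exactly the argument of Silverman, Ch.~V, and you correctly isolate the one genuinely nontrivial input, the positive-definiteness and bilinearity of the polarized degree form, together with the two standard ways of establishing it (dual isogeny or Weil pairing). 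Two small remarks. First, in your displayed inequality you are really polarizing at the pair $(\phi,-1)$ rather than $(\phi,1)$; since $\deg(-1)=\deg(1)=1$ this is immaterial, but it is worth stating precisely. Second, your derivation of strictness is correct but deserves emphasis: from $|N_p-p-1|\leq 2\sqrt{p}$ one gets $N_p<(\sqrt{p}+1)^2$ only because $\sqrt{p}$ is irrational for $p$ prime, so equality is impossible for the integer $N_p$; over a general prime power $q$ that is an even power of $p$, equality $|N-q-1|=2\sqrt{q}$ genuinely occurs (supersingular curves), so the strict form of the bound as stated in the paper really does use that $p$ is prime, and your proof captures this correctly.
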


Now we look at the algebraic structure of $E\left( \mathbb{F}_{p}\right) .$

Let $P\left( x_{1},y_{1}\right) $ and $Q\left( x_{2},y_{2}\right) $ be two
points on $E:y^{2}=x^{3}+Ax+B.$

Let also%
\begin{equation*}
m=\left\{ 
\begin{array}{c}
\left( y_{2}-y_{1}\right) /\left( x_{2}-x_{1}\right) \ \ \ \ \ \ \ \ \ \ \
if\ \ P\neq Q \\ 
\left( 3x_{1}^{2}+A\right) /2y_{1}\ \ \ \ \ \ \ \ \ \ \ \ \ \ \ \ \ \ \ if\
\ P=Q%
\end{array}%
\right.
\end{equation*}%
where $y_{1}\neq 0$, while when $y_{1}=0$, the point is of order $2$.

\begin{equation*}
x_{3}=m^{2}-x_{1}-x_{2}~\text{and}~y_{3}=m\left( x_{1}-x_{3}\right) -y_{1}
\end{equation*}%
then%
\begin{equation*}
P+Q=\left\{ 
\begin{array}{l}
\text{\textit{o \ \ \ \ \ \ \ \ \ \ \ \ \ \ \ \ \ \ \ \ \ }}%
if~~x_{1}=x_{2}~~\ \ \text{and}~y_{1}+y_{2}=0 \\ 
Q~\ \ \ \ \ \ \ \ \ \ \ \ \ \ \ \ \ \ \ \ \ \ \ \ \ \ \ \ \ \ \ \ if\ \ P=Q\
\  \\ 
\left( x_{3},y_{3}\right) ~~\ \ \ \ \ \ \ \ \ \ \ \ \ \ \ \ \ \ \ \ \ \text{%
otherwise}\ \ \ \ \ \ \ \ \ \ \ \ \ \ \ \ \ 
\end{array}%
.\right.
\end{equation*}%
By definition $-P=\left( x,-y\right) .$\ \ \ \ 

\bigskip Because of the definition of addition in an arbitrary field, it
takes very long to make any addition and the results are very complicated.

Here we shall deal with Bachet elliptic curves $y^{2}=x^{3}+a^{3}$ modulo $%
p. $ Let $N_{p,a}$ denote the number of rational points on this curve. Some
results on these curves have been given in $\cite{Desoca},$ and $\cite{SDYC}%
. $

A historical problem leading to Bachet elliptic curves is that how one can
write an integer as a difference of a square and a cube. In another words,
for a given fixed integer $c$, search for the solutions of the Diophantine
equation $y^{2}-x^{3}=c.$ This equation is widely called as Bachet or
Mordell equation. The existence of duplication formula makes this curve
interesting. This formula was found in 1621 by Bachet. When $(x,y)$ is a
solution to this equation where $x,~y\in 
%TCIMACRO{\U{211a} }%
%BeginExpansion
\mathbb{Q}
%EndExpansion
$, it is easy to show that $\left( \frac{x^{4}-8cx}{4y^{2}},~\frac{%
-x^{6}-20cx^{3}+8c^{2}}{8y^{3}}\right) $\ is also a solution for the same
equation. Furthermore, if $(x,y)$ is a solution such that $xy\neq 0$ and $%
c\neq 1,~-432$, then this leads to infinitely many solutions, which could
not proven by Bachet. Hence if an integer can be stated as the difference of
a cube and a square, this could be done in infinitely many ways.

If $p\equiv 5~\left( \func{mod}6\right) ,$ it is well known that $E\left( 
\mathbb{F}_{p}\right) \cong C_{p+1},$ the cyclic group of order $p+1,$ $\cite%
{Schmitt}$. But when $p\equiv 1~\left( \func{mod}6\right) $, there is no
result giving the group structure of $E\left( \mathbb{F}_{p}\right) .$ In
this work, we discuss this situation. We show that this group is isomorphic
to a direct product of two cyclic groups $C_{n}$ and $C_{nm},$ i.e. 
\begin{equation*}
E\left( F_{p}\right) \cong C_{n}\times C_{nm}
\end{equation*}%
for $m,n\in 
%TCIMACRO{\U{2115} }%
%BeginExpansion
\mathbb{N}
%EndExpansion
.$ If we denote the order of $E\left( \mathbb{F}_{p}\right) $ by $N_{p,a}$,
then 
\begin{equation*}
N_{p,a}=n^{2}m=p+1-b
\end{equation*}%
where $b>0$ when $a\in Q_{p},$ and $b<0$ otherwise. Here$\ b$ is the trace
of the Frobenius endomorphism.

\section{Bachet Elliptic curves having a group of the form $C_{n}\times
C_{nm}${\protect\large \thinspace \thinspace }}

Let E\ be the curve in $\left( 1\right) .$ Then its twist is defined as the
curve $y^{2}\equiv x^{3}+g^{3}a^{3},$ where $g$\ is an element of $%
Q_{p}^{^{\prime }}$, the set of quadratic non-residues modulo $p.$ As usual, 
$Q_{p}$ denotes the set of quadratic residues modulo $p.$ Here note that if $%
a\in Q_{p},$ then $ga\in Q_{p}^{^{\prime }}$ and when $a\in Q_{p}^{^{\prime
}},$ then $ga\in Q_{p}.$ It is easy to show that b of $\left( 1\right) \ $%
and of its twist have different signs$.$ Therefore

\begin{theorem}
Let $p\equiv 1~\left( \func{mod}6\right) $ be a prime. If $\left( 1\right) $
has the group isomorphic to $C_{n}\times C_{nm}$ with order $n^{2}m=p+1-b,$
then its twist is isomorphic to $C_{r}\times C_{rs}$ with order $%
r^{2}s=p+1+b.$

Let us define $t=\left\vert b\right\vert .$ That is%
\begin{equation*}
t=\left\vert p+1-N_{p,a}\right\vert .
\end{equation*}

We first have
\end{theorem}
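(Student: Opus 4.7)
The theorem splits into two independent assertions, and I would handle them in turn. For the isomorphism type of $E^{g}(\mathbb{F}_{p})$, observe first that the twist
\begin{equation*}
E^{g}: y^{2} \equiv x^{3} + g^{3}a^{3} = x^{3} + (ga)^{3} \pmod{p}
\end{equation*}
is itself a Bachet elliptic curve, now with parameter $ga \in \mathbb{F}_{p}^{*}$, over the very same prime $p \equiv 1 \pmod{6}$. Since every elliptic curve group over $\mathbb{F}_{p}$ is of the form $C_{d_{1}} \times C_{d_{2}}$ with $d_{1} \mid d_{2}$ (the full $n$-torsion over the algebraic closure being at most $(\mathbb{Z}/n\mathbb{Z})^{2}$), $E^{g}(\mathbb{F}_{p})$ automatically has the required shape $C_{r} \times C_{rs}$ for some $r, s \in \mathbb{N}$. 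No further appeal to the structure theory is needed here.

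For the order equation $r^{2}s = p + 1 + b$, the plan is to compare the character sums for $E$ and $E^{g}$ coming from the formula in the introduction. Writing
\begin{equation*}
N_{p,a} = p + 1 + \sum_{x \in \mathbb{F}_{p}} \chi(x^{3} + a^{3}), \qquad N_{p,ga} = p + 1 + \sum_{x \in \mathbb{F}_{p}} \chi(x^{3} + g^{3}a^{3}),
\end{equation*}
I substitute $x = gu$ in the second sum (a bijection of $\mathbb{F}_{p}$ with itself) and use multiplicativity of the Legendre symbol to get
\begin{equation*}
\chi\bigl((gu)^{3} + g^{3}a^{3}\bigr) = \chi(g^{3})\chi(u^{3} + a^{3}) = \chi(g)\chi(u^{3} + a^{3}) = -\chi(u^{3} + a^{3}),
\end{equation*}
since $\chi(g^{2}) = 1$ and $g \in Q_{p}'$. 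Summing over $u$ yields $N_{p,ga} = 2(p+1) - N_{p,a}$, and inserting $N_{p,a} = p + 1 - b$ gives $r^{2}s = N_{p,ga} = p + 1 + b$, which is the parenthetical remark about opposite signs of $b$ already advertised before the theorem.

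\textbf{Main obstacle.} There is no deep difficulty; the single point requiring care is the sign produced by $\chi(g^{3})$. Because $\chi \colon \mathbb{F}_{p}^{*} \to \{\pm 1\}$ is a homomorphism and $g$ is a non-residue, $\chi(g^{3}) = \chi(g)^{3} = -1$; had the exponent of $g$ in the Bachet equation been even, this sign flip would vanish and the relation $N_{p,a} + N_{p,ga} = 2(p+1)$ would collapse. Beyond that observation, the argument uses only the bijectivity of multiplication by $g$ on $\mathbb{F}_{p}$ and the fact that the twist of a Bachet curve is again a Bachet curve; neither Hasse's bound nor the explicit addition law is invoked.
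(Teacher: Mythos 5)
Your proposal is correct and takes essentially the paper's route: the paper gives no separate proof of this theorem, instead asserting just before it that the $b$ of $(1)$ and of its twist have opposite signs, and your substitution $x=gu$ together with $\chi(g^{3})=\chi(g)=-1$ in the character sum is precisely the standard verification of that assertion, yielding $N_{p,ga}=2(p+1)-N_{p,a}=p+1+b$. Your observation that the shape $C_{r}\times C_{rs}$ is automatic from the structure theorem for $E(\mathbb{F}_{p})$ (at most two cyclic factors, one dividing the other, since the full $n$-torsion is at most $(\mathbb{Z}/n\mathbb{Z})^{2}$) correctly supplies the only other ingredient, which the paper likewise leaves implicit.
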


\begin{theorem}
a) Let $p\equiv 1~\left( \func{mod}12\right) $ be a prime. Then 
\begin{equation*}
b\equiv 2~\left( \func{mod}12\right) ~iff~~N_{p,a}\equiv 0~\left( \func{mod}%
12\right)
\end{equation*}%
and%
\begin{equation*}
b\equiv 10~\left( \func{mod}12\right) ~iff~~N_{p,a}\equiv 4~\left( \func{mod}%
12\right) .
\end{equation*}%
b) Let $p\equiv 7~\left( \func{mod}12\right) ~$be a prime. Then 
\begin{equation*}
b\equiv 4~\left( \func{mod}12\right) ~iff~~N_{p,a}\equiv 4~\left( \func{mod}%
12\right)
\end{equation*}%
and%
\begin{equation*}
b\equiv 8~\left( \func{mod}12\right) ~iff~~N_{p,a}\equiv 0~\left( \func{mod}%
12\right) .
\end{equation*}
\end{theorem}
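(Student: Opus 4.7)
The plan is to leverage the defining relation $N_{p,a} = p + 1 - b$ recorded in the introduction, which rewrites each biconditional as a single congruence check modulo $12$. Because $b$ is determined by $N_{p,a}$ (and $p$) through this identity, an equivalence of the shape ``$b \equiv \alpha \pmod{12}$ if and only if $N_{p,a} \equiv \beta \pmod{12}$'' is nothing more than the assertion that $\alpha + \beta \equiv p + 1 \pmod{12}$, which either holds or fails on purely arithmetic grounds.

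For part (a), I would first observe that $p \equiv 1 \pmod{12}$ gives $p + 1 \equiv 2 \pmod{12}$, so $N_{p,a} \equiv 2 - b \pmod{12}$. Setting $b \equiv 2 \pmod{12}$ yields $N_{p,a} \equiv 0 \pmod{12}$, while $b \equiv 10 \pmod{12}$ yields $N_{p,a} \equiv -8 \equiv 4 \pmod{12}$; conversely, prescribing the two values of $N_{p,a}$ forces the stated values of $b$ by the same identity. For part (b) the only change is that $p \equiv 7 \pmod{12}$ gives $p + 1 \equiv 8 \pmod{12}$, so $N_{p,a} \equiv 8 - b \pmod{12}$, and the two pairs $(b, N_{p,a})$ equal to $(4,4)$ and $(8,0)$ modulo $12$ are read off at once.

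Since each biconditional collapses to a one-line arithmetic verification, I do not anticipate a genuine obstacle; the only care required is tracking signs and choosing representatives in $\{0,1,\dots,11\}$. It is worth flagging, however, that the theorem does not claim these to be the \emph{only} residue classes $b$ can occupy --- it merely pairs up the stated residues of $b$ with those of $N_{p,a}$ --- so no input from the group structure $C_n \times C_{nm}$ or from $2$- and $3$-torsion considerations is needed for the proof as stated. Such considerations would only become relevant if one wished to exclude, for example, $b \equiv 6 \pmod{12}$ in the case $p \equiv 1 \pmod{12}$, which is a strictly stronger assertion than the one being made here.
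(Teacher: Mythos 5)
Your proposal is correct and follows essentially the same route as the paper: both proofs reduce each biconditional to the identity $N_{p,a}=p+1-b$ taken modulo $12$, the paper by writing explicit representatives $p=1+12n$, $b=2+12m$ and substituting, you by computing directly with residues. Your closing remark that the theorem pairs residues rather than excluding others (e.g.\ $b\equiv 6$) is a fair reading and matches the paper, which handles that stronger exclusion separately in its later results on divisibility of $b$ by $6$.
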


\begin{proof}
a) Let $p\equiv 1~\left( \func{mod}12\right) $ be a prime. Then we can write
this as $p=1+12n,~n\in 
%TCIMACRO{\U{2124} }%
%BeginExpansion
\mathbb{Z}
%EndExpansion
.$ Also $b\equiv 2~\left( \func{mod}12\right) ~$can be stated as $b=2+12m,$ $%
m\in 
%TCIMACRO{\U{2124} }%
%BeginExpansion
\mathbb{Z}
%EndExpansion
.~$By substituting these, we get 
\begin{equation*}
b\equiv 2~\left( \func{mod}12\right) \Longleftrightarrow N_{p,a}=p+1-b\ \ \ 
\end{equation*}%
and hence $N_{p,a}$ $=1+12n+1-\left( 2+12m\right) =12\left( n-m\right) $ and
this is only valid when $N_{p,a}\equiv 0~\left( \func{mod}12\right) .~$%
Similarly,%
\begin{equation*}
b\equiv 10~\left( \func{mod}12\right) \Longleftrightarrow
N_{p,a}=p+1-b=1+12n+1-\left( 10+12m\right)
\end{equation*}%
and therefore\ $N_{p,a}\ =-8+12\left( n-m\right) ~$and this means that $%
N_{p,a}\equiv 4~\left( \func{mod}12\right) .~$Part b) is proved in a similar
fashion.
\end{proof}

\begin{theorem}
Let $p\equiv 1~\left( \func{mod}6\right) $ be a prime. Then $b$ is not
divisible by $6$.
\end{theorem}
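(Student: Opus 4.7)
The plan is to show that $b$ is even and $b$ is coprime to $3$; together these give $6\nmid b$.

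For the first part, I would use that $p\equiv 1~(\func{mod}~6)$ forces $p\equiv 1~(\func{mod}~3)$, so $\mathbb{F}_{p}$ contains a primitive cube root of unity $\omega $. Then $x^{3}+a^{3}=(x+a)(x+\omega a)(x+\omega ^{2}a)$ splits completely over $\mathbb{F}_{p}$, yielding three distinct points of order $2$ in $E(\mathbb{F}_{p})$. Together with $o$ they form a subgroup isomorphic to $C_{2}\times C_{2}$, so $4\mid N_{p,a}$. Since $p+1$ is even, $b=p+1-N_{p,a}$ is even as well. (As a bonus, this already recovers the mod-$4$ information used implicitly in Theorem 3.)

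For the second part, I would exploit that the Bachet curve has $j$-invariant $0$ and therefore has complex multiplication by the ring $\mathbb{Z}[\omega ]$. The Frobenius endomorphism $\phi $ then satisfies $\phi \bar{\phi}=p$ and $\phi +\bar{\phi}=b$ inside $\mathbb{Z}[\omega ]$. Writing $\phi =u+v\omega $ with $u,v\in \mathbb{Z}$ and using $\omega ^{2}+\omega +1=0$, one obtains $p=u^{2}-uv+v^{2}$ and $b=2u-v$, which combine into the identity
\begin{equation*}
4p=b^{2}+3v^{2}.
\end{equation*}
Reducing modulo $3$ and using $p\equiv 1~(\func{mod}~3)$ gives $b^{2}\equiv 1~(\func{mod}~3)$, hence $3\nmid b$. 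Combined with the evenness of $b$, this yields $6\nmid b$.

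The main obstacle is the identity $4p=b^{2}+3v^{2}$, which rests on the CM structure of the curve and is not developed elsewhere in the paper. If a self-contained proof is preferred, the same identity can be extracted directly from the character-sum formula $N_{p,a}=p+1+\sum_{x}\chi (x^{3}+a^{3})$ by splitting the sum according to the cubic residue class of $x^{3}+a^{3}$; this reduces $b$ to a Jacobi sum of a cubic and the quadratic character over $\mathbb{F}_{p}$, whose norm equals $p$. Either route delivers the congruence $b^{2}\equiv 1~(\func{mod}~3)$ needed to finish the argument.
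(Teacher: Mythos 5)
Your proof is correct, but it takes a genuinely different route from the paper's. The paper argues global-to-local: the curve $y^{2}=x^{3}+1$ over $\mathbb{Q}$ has a rational point of order $6$ (e.g.\ $(2,3)$), whose reduction mod $p$ still has order $6$, so $6\mid N_{p,a}$ whenever $a\in Q_{p}$ (such curves are $\mathbb{F}_{p}$-isomorphic to $y^{2}=x^{3}+1$), giving $b\equiv p+1-N_{p,a}\equiv 2\pmod{6}$; for $a\in Q_{p}^{\prime}$ the curve is the quadratic twist, so $b\equiv -2\pmod{6}$. You instead prove $2\mid b$ from the complete splitting of $x^{3}+a^{3}$ (full rational $2$-torsion, hence $4\mid N_{p,a}$) and $3\nmid b$ from the Eisenstein norm form $4p=b^{2}+3v^{2}$; both halves check out, and in fact $3\nmid b$ alone already yields $6\nmid b$, so your parity half is logically redundant for the stated theorem --- though it is exactly the mod-$4$ input the paper leaves implicit when it later sharpens $b\equiv \mp 2\pmod{6}$ to $b\equiv \mp 2\pmod{12}$ in Corollary 6. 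One step you should make explicit: placing the Frobenius inside $\mathbb{Z}[\omega]$ requires the curve to be ordinary --- for a supersingular curve over $\mathbb{F}_{p}$, $p>3$, one has $b=0$ and the conclusion fails --- and ordinarity holds here precisely because $p\equiv 1\pmod{3}$ (by Deuring's criterion, $y^{2}=x^{3}+D$ is supersingular exactly when $p\equiv 2\pmod{3}$), so the hypothesis is load-bearing at that point. As for trade-offs: the paper's argument is shorter and pins down the sign of $b$ mod $6$ according to the residuosity of $a$, which its subsequent results use, at the cost of quietly invoking injectivity of torsion under good reduction; your approach is heavier but entirely internal to $\mathbb{F}_{p}$, delivers the stronger quantitative constraint $4p=b^{2}+3v^{2}$, and your proposed Jacobi-sum derivation of that identity is a legitimate self-contained fallback if one wants to avoid CM language altogether.
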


\begin{proof}
Let us consider the curve $y^{2}=x^{3}+1.$ It has a point of order 6.
Therefore its reduction modulo p has also a point of order 6. Therefore%
\begin{equation*}
b\equiv p+1-N_{p,a}\equiv 2-0\equiv 2\text{ \ }\left( \func{mod}6\right) .
\end{equation*}

The other possibility for the curve is $y^{2}=x^{3}+a^{3}$ with $a$ is a
quadratic non-residue. It is the quadratic twist of the other curve, so has $%
b\equiv -2~\ \left( \func{mod}6\right) .$ Therefore in both cases $b$ is
non-zero $\func{mod}6.$
\end{proof}

\begin{corollary}
Let $p\equiv 1~\left( \func{mod}6\right) $ be a prime. Then $N_{p,a}\equiv 0$
$~or~N_{p,a}\equiv 4\left( \func{mod}6\right) .$
\end{corollary}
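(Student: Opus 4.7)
The plan is to deduce this as an immediate corollary of the preceding theorem. Since $p \equiv 1 \pmod{6}$, we have $p+1 \equiv 2 \pmod{6}$, and the relation $N_{p,a} = p+1-b$ lets us translate any congruence on $b$ modulo $6$ into one on $N_{p,a}$.

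First I would extract from the proof of the preceding theorem the sharper statement that $b \equiv 2 \pmod 6$ when $a \in Q_p$ and $b \equiv -2 \pmod 6$ when $a \in Q_p'$ (rather than merely that $6 \nmid b$). This is essentially what was established there using the curve $y^2 = x^3+1$, which has a rational point of order $6$, together with the quadratic twist argument, which flips the sign of $b$.

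Next I would split into the two cases. If $b \equiv 2 \pmod 6$, then $N_{p,a} = p+1-b \equiv 2-2 \equiv 0 \pmod 6$. If instead $b \equiv -2 \equiv 4 \pmod 6$, then $N_{p,a} \equiv 2-4 \equiv -2 \equiv 4 \pmod 6$. In either case $N_{p,a}$ lies in one of the two claimed residue classes.

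There is no real obstacle here; the only subtlety is making sure the previous proof is read as giving $b \equiv \pm 2 \pmod 6$ and not just $b \not\equiv 0 \pmod 6$, since a priori $b$ could also be $\pm 1$ or $3$ modulo $6$. Once that point is made explicit, the corollary is a one-line substitution into $N_{p,a} = p+1-b$.
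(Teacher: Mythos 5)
Your proposal is correct and follows exactly the route the paper intends: the corollary is stated without proof as an immediate consequence of the preceding theorem, whose proof in fact establishes the sharper fact $b\equiv \pm 2~(\func{mod}~6)$ (via the point of order $6$ on $y^{2}=x^{3}+1$ and the sign flip under quadratic twist), and substituting this into $N_{p,a}=p+1-b$ with $p+1\equiv 2~(\func{mod}~6)$ gives the two residue classes. Your remark that the theorem's bare statement $6\nmid b$ would not suffice (since $b\equiv 1,3,5$ would give $N_{p,a}\equiv 1,5,3$) is a valid and worthwhile clarification of a point the paper leaves implicit.
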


Also one obtains the following result:

\begin{corollary}
If $p\equiv 1~\left( \func{mod}12\right) $ is a prime, then $b\equiv \mp
2~\left( \func{mod}12\right) $ and if $p\equiv 7~\left( \func{mod}12\right) $
is a prime, then $b\equiv \mp 4~\left( \func{mod}12\right) .$
\end{corollary}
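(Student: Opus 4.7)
The plan is to route through Theorem 3: that result converts residues of $b$ mod $12$ into residues of $N_{p,a}$ mod $12$, so once I show $N_{p,a} \equiv 0$ or $4 \pmod{12}$, the corollary follows by inspection in each of the two cases.

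To pin down $N_{p,a} \pmod{12}$, I would combine two independent congruences. The first is $N_{p,a} \equiv 0 \pmod 4$, which I would extract from the $2$-torsion of $E$: it consists of $o$ together with the points $(x_{0},0)$ where $x_{0}$ is a root of $x^{3}+a^{3}=(x+a)(x^{2}-ax+a^{2})$. The quadratic factor has discriminant $-3a^{2}$, so it splits over $\mathbb{F}_{p}$ exactly when $-3$ is a quadratic residue mod $p$. Standard quadratic reciprocity gives $-3 \in Q_{p}$ iff $p \equiv 1 \pmod 3$, which is automatic under the hypothesis $p \equiv 1 \pmod 6$. Hence all three $2$-torsion points are $\mathbb{F}_{p}$-rational, $E(\mathbb{F}_{p})$ contains a copy of $C_{2} \times C_{2}$, and $4 \mid N_{p,a}$. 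The second congruence is just Corollary 5: $N_{p,a} \equiv 0$ or $4 \pmod 6$. Intersecting these modulo $12$ leaves only the two possibilities $N_{p,a} \equiv 0$ or $N_{p,a} \equiv 4 \pmod{12}$.

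Finally I would feed each of these two residues into Theorem 3. When $p \equiv 1 \pmod{12}$, the alternatives $N_{p,a} \equiv 0$ and $N_{p,a} \equiv 4 \pmod{12}$ correspond to $b \equiv 2$ and $b \equiv 10 \pmod{12}$ respectively, i.e.\ $b \equiv \mp 2 \pmod{12}$. When $p \equiv 7 \pmod{12}$, the same two cases give $b \equiv 8$ and $b \equiv 4 \pmod{12}$, i.e.\ $b \equiv \mp 4 \pmod{12}$, which is exactly the claim.

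The one genuinely substantive step is the $2$-torsion computation, specifically observing that $p \equiv 1 \pmod 6$ forces $-3$ to be a square in $\mathbb{F}_{p}$ and hence makes $x^{3}+a^{3}$ split completely over $\mathbb{F}_{p}$; everything else is congruence bookkeeping that rides on results already proved earlier in the section.
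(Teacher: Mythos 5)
Your proof is correct, and it is worth pointing out that it does more than the paper does: the paper states this corollary with no proof at all, offering it as something ``one obtains'' from Theorem 3 and Theorem 4, and read that way the paper's implicit derivation is incomplete. Theorem 4 (equivalently Corollary 5, $N_{p,a}\equiv 0$ or $4 \pmod{6}$) only controls $b$ modulo $6$; for $p\equiv 1\pmod{12}$ this still permits $b\equiv 4$ or $8\pmod{12}$ (i.e.\ $N_{p,a}\equiv 10$ or $6\pmod{12}$), and Theorem 3 cannot exclude those residues, since its two equivalences are pure congruence bookkeeping and carry no exhaustiveness claim. Your $2$-torsion step is exactly the missing ingredient: since $x^{3}+a^{3}=(x+a)(x^{2}-ax+a^{2})$ and the quadratic factor has discriminant $-3a^{2}$, with $-3$ a quadratic residue precisely when $p\equiv 1\pmod{3}$, the hypothesis $p\equiv 1\pmod{6}$ forces all three points of order $2$ to be rational, so $C_{2}\times C_{2}\subseteq E(\mathbb{F}_{p})$ and $4\mid N_{p,a}$; intersected with Corollary 5 this pins $N_{p,a}\equiv 0$ or $4\pmod{12}$, after which Theorem 3 (or simply $b\equiv p+1-N_{p,a}\pmod{12}$) gives $b\equiv \mp 2$ or $\mp 4\pmod{12}$ in the two cases, as you say. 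So your route is not merely a valid alternative; it supplies a justification the paper omits, at the modest cost of one genuinely new input (the rational full $2$-torsion) that the paper never records, even though it is also what forces $n$ to be even in the paper's later decomposition $E(\mathbb{F}_{p})\cong C_{n}\times C_{nm}$.
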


We now have the following result about the number of points on curves $%
\left( 1\right) .$

\begin{theorem}
Let $p\equiv 1~\left( \func{mod}6\right) $ be a prime. Then

a) If $t\equiv 2$ $\left( \func{mod}6\right) ,$ then $\left( 1\right) $ has $%
b=t$ and $N_{p,a}\equiv 0~\left( 6\right) ,$ and its twist has $b=-t$ and $%
N_{p,a}\equiv 4~\left( \func{mod}6\right) .$

b) If $t\equiv 4~\left( \func{mod}6\right) ,$ then $\left( 1\right) $ has $%
b=t$ and $N_{p,a}\equiv 4~\left( \func{mod}6\right) ,$ and its twist has $%
b=-t$ and $N_{p,a}\equiv 0~\left( \func{mod}6\right) .$
\end{theorem}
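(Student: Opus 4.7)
The plan is to reduce the statement to a short piece of arithmetic modulo $6$, using two inputs already established earlier: Theorem 2, which records that the traces $b$ of $(1)$ and $-b$ of its twist differ only in sign; and the argument in the proof of Theorem 4, which shows that $b \equiv 2 \pmod{6}$ whenever the curve is isomorphic to $y^{2}=x^{3}+1$ (because of its explicit $6$-torsion point), while the twisted class then satisfies $b \equiv -2 \equiv 4 \pmod{6}$. Taken together these say that for $p \equiv 1 \pmod{6}$ the trace of any Bachet curve lies in the residue class $\pm 2 \pmod 6$, and in particular $b \not\equiv 0 \pmod 6$.

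Next I would unpack the invariant $t = |b|$. Since $b \equiv \pm 2 \pmod{6}$, the value of $t \pmod{6}$ is forced to be either $2$ or $4$, and which of these occurs records the interaction between the $\pmod 6$ class of $b$ and its sign: $b\equiv 2 \pmod 6$ with $b>0$ and $b\equiv -2 \pmod 6$ with $b<0$ both yield $t\equiv 2\pmod 6$, while the two remaining sign combinations yield $t\equiv 4\pmod 6$. Within the twist pair exactly one curve has trace $+t$ and the other $-t$; the statement of the theorem labels the former as $(1)$, so the case split in the theorem is really a split on the sign pattern just described.

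For part (a) I take $b = t \equiv 2 \pmod{6}$ and compute, using $p \equiv 1 \pmod{6}$,
\[
N_{p,a} \;=\; p+1-b \;\equiv\; 1+1-2 \;\equiv\; 0 \pmod{6},
\]
while the twist, having trace $-t$, gives $p+1+t \equiv 1+1+2 \equiv 4 \pmod{6}$. Part (b) is the mirror image: when $b = t \equiv 4 \pmod{6}$ one gets $N_{p,a} \equiv 2-4 \equiv 4 \pmod 6$, and for the twist $2+4 \equiv 0 \pmod 6$.

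The only mild obstacle is the sign bookkeeping in the second paragraph, since Theorem 4 only controls $b \pmod 6$ and not the sign of $b$ (indeed, among the various Bachet curves both signs really do occur); it is this bookkeeping that pins down which curve in the twist pair earns the label $(1)$ in each case. Once that is settled, each of the two parts is a one-line reduction modulo $6$ using Theorems 2 and 4.
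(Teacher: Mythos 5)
Your proposal is correct and follows essentially the same route as the paper: both reduce the claim to arithmetic modulo $6$ on $N_{p,a}=p+1-b$ using $p\equiv 1\ (\func{mod}6)$ and $b\equiv \mp 2\ (\func{mod}6)$, with the twist handled by the sign flip $b\mapsto -b$ of Theorem 2. Your explicit bookkeeping of how the sign of $b$ interacts with $t=|b|$ is a point the paper passes over with ``the other parts can be proven similarly,'' but it is the same argument.
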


\begin{proof}
Let $p\equiv 1~\left( \func{mod}6\right) $ be a prime. Let us put $%
p=1+6n,~n\in 
%TCIMACRO{\U{2124} }%
%BeginExpansion
\mathbb{Z}
%EndExpansion
.$ Let $t=2~\left( \func{mod}6\right) .$ If $b=t,$ then $b\equiv 2~\left( 
\func{mod}6\right) $ and now put $b=2+6m,~m\in 
%TCIMACRO{\U{2124} }%
%BeginExpansion
\mathbb{Z}
%EndExpansion
.$ Therefore 
\begin{eqnarray*}
N_{p,a} &=&p+1-b=6n+1+1-2-6m \\
&=&6\left( n-m\right)
\end{eqnarray*}%
implying $N_{p,a}\equiv 0~\left( \func{mod}6\right) .$

The other parts can be proven similarly.
\end{proof}

We then immediately have the following result concerning the elements of
order 3:

\begin{corollary}
a) Let $p\equiv 1~\left( \func{mod}12\right) $ be a prime. If $t\equiv
2~\left( \func{mod}12\right) ,$ then $\left( 1\right) $ has $b=t$ and $%
N_{p,a}\equiv 0~\left( \func{mod}12\right) ~$and $E\left( \mathbb{F}%
_{p}\right) $ has elements of order $3$. Its twist has $b=-t$ and $%
N_{p,a}\equiv 4~\left( \func{mod}12\right) $ implying that there are no
elements of order $3.$

If $t\equiv 10~\left( \func{mod}12\right) ,$ then $\left( 1\right) $ has $%
b=t $ and $N_{p,a}\equiv 4~\left( \func{mod}12\right) $ and $E\left( \mathbb{%
F}_{p}\right) $ has no elements of order $3,$ while its twist has $b=-t$ $\ $%
and $N_{p,a}\equiv 0~\left( \func{mod}12\right) $ implying that the group
has elements of order $3.$

b) Let $p\equiv 7~\left( \func{mod}12\right) $ be a prime. If $t\equiv
4~\left( \func{mod}12\right) ,$ then $\left( 1\right) $ has $b=t$ and $%
N_{p,a}\equiv 4~\left( \func{mod}12\right) $ and therefore has no points of
order $3,$ while its twist has $b=-t$ and $N_{p,a}\equiv 0~\left( \func{mod}%
12\right) $ having elements of order $3.$

If $t\equiv 8~\left( \func{mod}12\right) $, then $\left( 1\right) $ has $b=t$
and $N_{p,a}\equiv 0~\left( \func{mod}12\right) $ implying it has elements
of order $3$ while its twist has $b=-t$ and $N_{p,a}\equiv 4~\left( \func{mod%
}12\right) $ having no such elements.
\end{corollary}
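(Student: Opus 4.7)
The plan is to deduce each of the four cases in two steps: first pin down $N_{p,a}\pmod{12}$ from the preceding Theorem (the one classifying $b\pmod{12}$ versus $N_{p,a}\pmod{12}$ in the two cases $p\equiv 1,7\pmod{12}$), then read off the $3$-divisibility of $N_{p,a}$ to conclude about elements of order $3$ using two classical facts about finite abelian groups.

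For the residues modulo $12$, given $t$ in the prescribed class and $b=t$, the relevant $N_{p,a}\pmod{12}$ follows directly from that earlier Theorem. For the twist, Theorem~2 tells us that $b$ gets replaced by $-b$, so we simply apply the same Theorem with $b\equiv -t\pmod{12}$. For instance, in part (a) with $t\equiv 2\pmod{12}$: since $b\equiv 2\pmod{12}$ we get $N_{p,a}\equiv 0\pmod{12}$, while for the twist $b\equiv -2\equiv 10\pmod{12}$ yields $N_{p,a}\equiv 4\pmod{12}$. The other three sub-cases are handled by the identical bookkeeping, using that $-2\equiv 10$, $-4\equiv 8$, and $-8\equiv 4\pmod{12}$.

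For the order-$3$ statements, I would invoke two standard facts. First, if $N_{p,a}\equiv 0\pmod{12}$ then $3\mid |E(\mathbb{F}_p)|$, so by Cauchy's theorem applied to the finite abelian group $E(\mathbb{F}_p)$ there exists an element of order $3$. Conversely, if $N_{p,a}\equiv 4\pmod{12}$ then $N_{p,a}\equiv 1\pmod 3$, so $3\nmid|E(\mathbb{F}_p)|$ and by Lagrange's theorem no element can have order $3$. Pairing each of the $N_{p,a}\pmod{12}$ conclusions from step one with the appropriate Cauchy/Lagrange verdict produces exactly the four assertions of the corollary.

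I do not foresee a genuine obstacle; essentially all the real work has already been done in the preceding theorem on $b\pmod{12}$ and in the twist identity $b\mapsto -b$. The only genuinely new ingredient is the translation between "$12\mid N_{p,a}$" and the existence of a $3$-torsion point (and between $N_{p,a}\equiv 4\pmod{12}$ and its nonexistence), which is a one-line application of Cauchy and Lagrange. If any care is needed, it is simply to keep the two residue classes of $p$ and the two residue classes of $t$ straight when passing between the curve and its twist.
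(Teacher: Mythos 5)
Your proposal is correct and matches the paper's intended derivation: the paper states this corollary as immediate from the mod-$12$ theorem (Theorem 3) together with the twist sign change $b\mapsto -b$ (Theorem 2), which is exactly your residue bookkeeping, and your Cauchy/Lagrange step supplies the translation between $3\mid N_{p,a}$ and the existence of order-$3$ points that the paper leaves implicit (and later elaborates in Theorem 9). No gaps; the only care needed, as you note, is the case-by-case arithmetic, which you carry out correctly.
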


The elements of order $3$ are important in the classification of these
elliptic curves modulo $p$. We now show that their number is either $2$ or $%
8 $:

\begin{theorem}
Let $p\equiv 1~\left( \func{mod}6\right) $ be a prime. If $N_{p,a}\equiv
0~\left( \func{mod}6\right) ,$ then there are $2$ or $8$ points of order $3.$
\end{theorem}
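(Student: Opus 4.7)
The plan is to identify the points of order $3$ as the nonidentity elements of the $3$-torsion subgroup $E(\mathbb{F}_p)[3]$, and then to use two ingredients: Cauchy's theorem to guarantee that this subgroup is nontrivial, and the standard description of the full $3$-torsion over the algebraic closure to bound its size.

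First I would note that since $N_{p,a}\equiv 0\pmod 6$ we have $3\mid N_{p,a}$, so by Cauchy's theorem $E(\mathbb{F}_p)$ contains at least one element of order $3$. Hence $E(\mathbb{F}_p)[3]:=\{P\in E(\mathbb{F}_p):3P=o\}$ is a nontrivial subgroup. Next, because $p\equiv 1\pmod 6$ we have in particular $p\neq 3$, so over $\overline{\mathbb{F}_p}$ the full $3$-torsion satisfies $E[3]\cong C_3\times C_3$, a group of order $9$. The $\mathbb{F}_p$-rational $3$-torsion $E(\mathbb{F}_p)[3]$ is a subgroup of $E[3]$, so its order lies in $\{1,3,9\}$.

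Combining these two facts: the order of $E(\mathbb{F}_p)[3]$ is at least $3$ (by Cauchy) and divides $9$ (by the subgroup containment), leaving exactly the two possibilities $|E(\mathbb{F}_p)[3]|=3$ or $|E(\mathbb{F}_p)[3]|=9$. Subtracting the identity, the number of points of exact order $3$ is $2$ or $8$, as required. This also dovetails with the structure theorem $E(\mathbb{F}_p)\cong C_n\times C_{nm}$ quoted earlier: the two cases correspond respectively to $3\mid nm$ but $3\nmid n$, and $3\mid n$.

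The only place where there is anything to be careful about is the invocation of $E[3]\cong (\mathbb{Z}/3)^2$ over the algebraic closure. This is the standard structure of $\ell$-torsion on an elliptic curve in characteristic $\neq \ell$, and it applies here since $p\geq 7$. Given that, the rest is a one-line divisibility argument, so I would not expect any real obstacle beyond citing (or stating) this structural fact.
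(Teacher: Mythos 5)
Your proof is correct and takes essentially the same route as the paper: bound the rational $3$-torsion inside a group of order at most $9$, rule out triviality, and count $2$ or $8$ nonidentity elements accordingly. The one substantive difference works in your favor. The paper cites Schoof for the fact that the rational points of order dividing $3$ form a subgroup that is trivial, $C_{3}$, or $C_{3}\times C_{3}$ (you obtain the same bound from the standard fact $E[3]\cong C_{3}\times C_{3}$ over $\overline{\mathbb{F}_{p}}$ in characteristic $\neq 3$, which is equivalent), but it then dismisses the trivial case with the remark that ``as we want to determine the number of elements of order $3$, this group cannot be trivial'' --- an assertion, not an argument, and one that never actually uses the hypothesis $N_{p,a}\equiv 0~(\func{mod}~6)$. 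Your invocation of Cauchy's theorem is exactly the missing justification: $3\mid N_{p,a}$ forces an element of order $3$, so $\left| E(\mathbb{F}_{p})[3]\right| \in \{3,9\}$ and the count follows. So your writeup is, at this one point, more complete than the paper's own proof; your closing remark tying the two cases to $3\mid n$ versus $3\nmid n$ in $E(\mathbb{F}_{p})\cong C_{n}\times C_{nm}$ also matches the observation the paper makes immediately after the theorem.
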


\begin{proof}
By $\cite{Schoof},~$there are at most 9 points together with the point at
infinity $o,$ forming a subgroup which is either trivial, cyclic of order $3$
or the direct product of two cyclic groups of order $3$. As we want to
determine the number of elements of order\ $3,$ this group cannot be
trivial. Then it is $C_{3}$ or $C_{3}\times C_{3}$ and it is well-known that
it contains $2$ or $8$ elements of order $3,$ respectively.
\end{proof}

In fact, if we let $E\left( \mathbb{F}_{p}\right) \cong C_{n}\times C_{nm},$
then when $3$ divides $n,~E\left( \mathbb{F}_{p}\right) $ has $8$\ points of
order $3,$ and when not, it has $2$ points of order $3.$

We are now going to give one of the main results in Theorem 13. We first
need the following results:

\begin{corollary}
Let $p$ be a prime. Then for only $x=0$ among all values of $x$ in $\mathbb{F%
}_{p}$, $x^{3}+1$ takes the value $1.$
\end{corollary}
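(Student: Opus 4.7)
I would begin by observing that the claim is essentially a restatement of the no-zero-divisor property of the prime field. Subtracting $1$ from both sides of $x^{3}+1=1$ reduces the statement to verifying that $x^{3}=0$ has only $x=0$ as a solution in $\mathbb{F}_{p}$.

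For the forward direction, I would argue that $\mathbb{F}_{p}$ is a field, hence an integral domain, so the vanishing of the product $x\cdot x\cdot x=0$ forces $x=0$. Equivalently, at the level of integers, $x^{3}\equiv 0\pmod{p}$ means $p\mid x^{3}$, and since $p$ is prime, Euclid's lemma yields $p\mid x$, i.e.\ $x\equiv 0\pmod{p}$. The converse is immediate: $x=0$ plainly satisfies $0^{3}+1=1$, so $x=0$ is indeed the unique solution.

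I do not anticipate any real obstacle here; the corollary is a direct consequence of $\mathbb{F}_{p}$ being a field. Its inclusion is most likely a bookkeeping step to be cited in the subsequent analysis of points of order $3$ on $y^{2}=x^{3}+a^{3}$, where the $x$-coordinate $x=0$ singles out the distinguished $y$-values $\pm\sqrt{a^{3}}$ in the case that $a^{3}$ is a quadratic residue modulo $p$, and these turn out to be points of order $3$ relevant to the classification in Theorem~10.
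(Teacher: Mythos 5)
Your proof is correct and takes the same route as the paper: the paper simply asserts that uniqueness is ``clear from the fact $p$ is prime,'' and your reduction of $x^{3}+1=1$ to $x^{3}=0$, settled by the integral-domain property of $\mathbb{F}_{p}$ (equivalently Euclid's lemma), is exactly the detail being waved at. Nothing is missing; you have merely written out what the paper left implicit.
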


\begin{proof}
It is clear that $x=0$ satisfies the condition. The fact that no other value
of $x$ satisfies $x^{3}+1=1~$is clear from the fact $p$ is prime.
\end{proof}

\begin{theorem}
Let $p\equiv 1~\left( \func{mod}6\right) $ be a prime. There are $3$ values
of$\ x$ between $1$ and $p$ so that $x^{3}+1\equiv 0~\left( \func{mod}%
p\right) .$
\end{theorem}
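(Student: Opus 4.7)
The plan is to factor $x^{3}+1=(x+1)(x^{2}-x+1)$ over $\mathbb{F}_{p}$ and count roots of each factor separately. The linear factor contributes the single solution $x\equiv -1\equiv p-1~(\func{mod}p)$, which lies strictly between $1$ and $p$ since $p>3$. The main task is to show that the quadratic factor contributes exactly two further distinct roots.

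For this I would compute the discriminant of $x^{2}-x+1$, which is $1-4=-3$, so the quadratic has roots in $\mathbb{F}_{p}$ if and only if $-3$ is a quadratic residue modulo $p$. The key step is then the standard fact (via quadratic reciprocity, or directly by exhibiting a primitive cube root of unity) that $-3\in Q_{p}$ exactly when $p\equiv 1~(\func{mod}3)$. Since $p\equiv 1~(\func{mod}6)$ implies $p\equiv 1~(\func{mod}3)$, this residue condition is satisfied, giving two solutions of $x^{2}-x+1\equiv 0~(\func{mod}p)$.

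Finally I would verify distinctness. The two roots of $x^{2}-x+1$ are distinct since the discriminant $-3$ is nonzero modulo $p$ (as $p\equiv 1~(\func{mod}6)$ forces $p\neq 3$). Substituting $x=-1$ into $x^{2}-x+1$ yields $3\not\equiv 0~(\func{mod}p)$, so $x=-1$ is not a root of the quadratic factor. Therefore the three roots are pairwise distinct, and all lie in the range $1,\ldots,p$.

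The only genuinely nontrivial ingredient is the residue statement for $-3$; everything else is routine factoring and discriminant arithmetic. If one prefers to avoid quadratic reciprocity, an equivalent route is to observe that $\mathbb{F}_{p}^{\ast }$ is cyclic of order $p-1$ with $3\mid p-1$, so the cubing map is $3$-to-$1$ onto the subgroup of cubes; since $-1=(-1)^{3}$ is manifestly a cube, the equation $x^{3}=-1$ has exactly three solutions in $\mathbb{F}_{p}^{\ast }$, giving the same count.
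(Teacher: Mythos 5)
Your proposal is correct, and it is more careful than the paper's own argument, which is a one-liner: the paper asserts that $x^{3}\equiv a\ (\mathrm{mod}\ p)$ has three solutions in $\mathbb{F}_{p}$ for \emph{every} $a\neq 0$ and specializes to $a=-1$. That blanket assertion is actually false when $p\equiv 1\ (\mathrm{mod}\ 6)$: since $3\mid p-1$, the cubing map on the cyclic group $\mathbb{F}_{p}^{\ast}$ is $3$-to-$1$ onto the index-$3$ subgroup of cubes, so two thirds of the nonzero residues have \emph{no} cube root at all. The paper's conclusion survives only because $-1=(-1)^{3}$ happens to be a cube --- which is precisely the observation you make in your closing remark, so your final paragraph is in effect the corrected version of the paper's intended proof. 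Your main route is genuinely different: factoring $x^{3}+1=(x+1)(x^{2}-x+1)$ and splitting the quadratic via the discriminant $-3$, using the classical fact that $-3\in Q_{p}$ iff $p\equiv 1\ (\mathrm{mod}\ 3)$. This costs you one nontrivial input (the residue status of $-3$, via reciprocity or an explicit cube root of unity) but yields explicit roots and their pairwise distinctness by direct checking; the group-theoretic count is shorter and needs nothing beyond cyclicity of $\mathbb{F}_{p}^{\ast}$, but gives the count without exhibiting the roots. Either of your two arguments would be a sound replacement for the proof as printed.
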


\begin{proof}
It is obvious that $x^{3}\equiv a~\left( \func{mod}p\right) $ has three
solutions in $\mathbb{F}_{p}$ for every $a\neq 0.$ For $a=-1,$ the proof
follows.
\end{proof}

\begin{theorem}
Let $\ p\equiv 1~\left( \func{mod}6\right) $ be a prime. Then 
\begin{equation*}
\underset{x\in \mathbb{F}_{p}}{\sum }\chi \left( x^{3}+1\right) \equiv
4~\left( \func{mod}6\right) .
\end{equation*}
\end{theorem}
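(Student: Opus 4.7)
The plan is to rewrite the character sum as the negative of the trace of Frobenius for the specific curve $E:y^{2}=x^{3}+1$, and then invoke the congruence on $b$ modulo $6$ already established in the earlier theorem.

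First, I would specialize the point-counting formula
\[
N_{p}=p+1+\sum_{x=0}^{p-1}\chi(x^{3}+Ax+B)
\]
stated in the Introduction to the Bachet case $A=0$, $B=1$. Writing $N_{p,1}$ for the order of $E(\mathbb{F}_{p})$ and $b=p+1-N_{p,1}$ for the trace of Frobenius, this yields the identity
\[
\sum_{x\in\mathbb{F}_{p}}\chi(x^{3}+1)=N_{p,1}-(p+1)=-b.
\]

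Next, since $1\in Q_{p}$, the curve $y^{2}=x^{3}+1$ is exactly the curve used in the proof of the earlier theorem asserting $6\nmid b$: it has a rational point of order $6$ (for instance $(2,3)$), which reduces to a point of order $6$ in $E(\mathbb{F}_{p})$ for every prime $p\equiv 1\pmod{6}$. Consequently $6\mid N_{p,1}$, and hence $b\equiv p+1-N_{p,1}\equiv 2\pmod{6}$. Substituting into the displayed identity gives
\[
\sum_{x\in\mathbb{F}_{p}}\chi(x^{3}+1)=-b\equiv -2\equiv 4\pmod{6},
\]
which is the desired congruence. No real obstacle arises beyond spotting the identification of the character sum with $-b$; once this is in place, the result is an immediate corollary of the congruence on $b$ already proved.
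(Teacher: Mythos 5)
Your proof is correct, but it takes a genuinely different route from the paper. The paper's proof is a self-contained counting argument: since $p\equiv 1~\left( \func{mod}6\right) $, the cube map is $3$-to-$1$ on $\mathbb{F}_{p}^{\ast }$, so after removing the single $x$ with $x^{3}+1=1$ (namely $x=0$) and the three $x$ with $x^{3}+1\equiv 0$, the remaining $p-4$ values of $x^{3}+1$ fall into $\frac{p-4}{3}=2k-1$ triples of constant character, each contributing $\pm 3$; writing $s$ for the number of triples landing in $Q_{p}$, the sum equals $1+3s-3\left( 2k-1-s\right) =6\left( s-k\right) +4$, and the odd parity of $2k-1$ forced by $p\equiv 1~\left( \func{mod}6\right) $ is what makes the residue come out as $4$ rather than $1\pm 3$. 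You instead identify the character sum with $-b$ for the specific curve $y^{2}=x^{3}+1$ via the point-counting formula, and then import the torsion argument of the paper's Theorem~4: the rational point $\left( 2,3\right) $ of order $6$ survives reduction (good reduction at $p>3$, and $\gcd \left( 6,p\right) =1$, both automatic since $p\geq 7$ here), so $6\mid N_{p,1}$ and $b\equiv p+1\equiv 2~\left( \func{mod}6\right) $, whence the sum is $-b\equiv 4$. This is non-circular, since Theorem~4 precedes Theorem~12 in the paper and its proof does not use it; in effect you run the paper's Corollary~14 ($N_{p,a}\equiv 0\Rightarrow b\equiv 2$) in the reverse direction. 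What each approach buys: yours is shorter and conceptually cleaner given Theorem~4, but it leans on elliptic-curve machinery (injectivity of torsion under reduction) that the paper only asserts informally; the paper's argument is longer but purely elementary, using nothing beyond the multiplicative character and the structure of cubes in $\mathbb{F}_{p}^{\ast }$, which also makes Theorem~12 available as independent support for the circle of results around Theorems~4 and~13 rather than a consequence of them.
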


\begin{proof}
For each $x\in \mathbb{F}_{p},$ calculate the $p$ values of $x^{3}+1.$ By
Theorem 10, one of these values is $1$. By Theorem 11, three of them are 0.
The rest $p-4$ values of $x^{3}+1$\ are grouped into $\frac{p-4}{3}$
triples. As $p\equiv 1~\left( \func{mod}6\right) ,$ $\frac{p-4}{3}$ is odd.
Indeed, let us write $p=1+6k,~k\in 
%TCIMACRO{\U{2124} }%
%BeginExpansion
\mathbb{Z}
%EndExpansion
.$ Then $\frac{p-4}{3}=2k-1.$ Let us suppose that out of these triples, s
triples are in $Q_{p}$ and $2k-1-s$ are in $Q_{p}^{^{\prime }}.$ If a triple
is in $Q_{p},$ then it adds $+3$ to the sum $\underset{x\in \mathbb{F}_{p}}{%
\sum }\chi \left( x^{3}+1\right) ,~$and if it is in $Q_{p}^{^{\prime }},$ $%
-3 $ is added. Therefore%
\begin{eqnarray*}
\underset{x\in \mathbb{F}_{p}}{\sum }\chi \left( x^{3}+1\right)
&=&1+3.0+s.\left( +3\right) +\left( 2k-1-s\right) .\left( -3\right) \\
&=&6\left( s-k\right) +4
\end{eqnarray*}%
implying the result.
\end{proof}

\begin{theorem}
Let $p\equiv 1~\left( \func{mod}6\right) $ be a prime. $a\in Q_{p}$ \ iff $%
N_{p,a}\equiv 0~\left( \func{mod}6\right) .$
\end{theorem}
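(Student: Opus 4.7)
The plan is to reduce the problem to the single case $a=1$ by an explicit $\mathbb{F}_p$-isomorphism, determine $N_{p,1}\pmod 6$ from Theorem 12, and settle the non-residue case using the quadratic twist of Theorem 2. By Corollary 5 the only possible residues of $N_{p,a}$ modulo $6$ are $0$ and $4$, so it suffices to identify which one occurs in each residuosity class.

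First I would observe that $N_{p,a}$ is constant on each of $Q_p$ and $Q_p'$. Indeed, if $a=c^{2}\in Q_p$ with $c\in\mathbb{F}_p^{*}$, then $a^{3}=c^{6}$ and the invertible substitution $(X,Y)\mapsto(c^{2}X,c^{3}Y)$ of $\mathbb{F}_p\times\mathbb{F}_p$ carries the solution set of $Y^{2}=X^{3}+1$ bijectively onto that of $y^{2}=x^{3}+a^{3}$, forcing $N_{p,a}=N_{p,1}$. The same substitution with $a=gc^{2}$ for a fixed non-residue $g$ reduces every $a\in Q_p'$ to the single curve $y^{2}=x^{3}+g^{3}$.

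Next, combining $N_{p,1}=p+1+\sum_{x\in\mathbb{F}_p}\chi(x^{3}+1)$ with Theorem 12 and the congruence $p+1\equiv 2\pmod 6$ gives
\[
N_{p,1}\equiv 2+4\equiv 0\pmod 6,
\]
which proves the forward implication. For the reverse, Theorem 2 identifies $y^{2}=x^{3}+g^{3}$ as the quadratic twist of $y^{2}=x^{3}+1$, so their traces of Frobenius are opposite and $N_{p,g}+N_{p,1}=2(p+1)\equiv 4\pmod 6$, forcing $N_{p,g}\equiv 4\pmod 6$. Together with Corollary 5 this completes the equivalence.

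The main obstacle is the first step: the change of variables to $Y^{2}=X^{3}+1$ is genuinely defined over $\mathbb{F}_p$ only when $a$ is a square in $\mathbb{F}_p$, and articulating this correctly is precisely what partitions Bachet curves into the two sextic-twist classes that control the sign of $b$ and hence the parity of $N_{p,a}\pmod 6$.
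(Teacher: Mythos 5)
Your proof is correct, but it takes a genuinely different route from the paper's. The paper never leaves the character sum: substituting $x\mapsto ax$ in $\sum_{x\in\mathbb{F}_{p}}\chi\left( x^{3}+a^{3}\right) $ and factoring out $\chi\left( a^{3}\right) =\chi\left( a\right) $ reduces the sum to $\pm \sum_{x\in\mathbb{F}_{p}}\chi\left( x^{3}+1\right) $ for \emph{every} $a$, with the residuosity of $a$ entering only through the sign; Theorem 12 then gives $N_{p,a}\equiv 0\pmod{6}$ when $a\in Q_{p}$, and the converse direction is the companion computation with $\chi\left( a\right) =-1$ (the paper's Theorem 15), yielding $N_{p,a}\equiv 4\pmod{6}$. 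You instead prove the stronger fact that for $a=c^{2}\in Q_{p}$ the map $\left( X,Y\right) \mapsto \left( c^{2}X,c^{3}Y\right) $ is an $\mathbb{F}_{p}$-isomorphism onto the curve with $a=1$ --- squareness is used to extract the root $c$ rather than through the sign of $\chi\left( a\right) $ --- and you settle the non-residue class by the twist relation $N_{p,g}+N_{p,1}=2\left( p+1\right) \equiv 4\pmod{6}$ from Theorem 2 instead of recomputing the sum. Both arguments ultimately rest on Theorem 12 for the base curve $y^{2}=x^{3}+1$. The paper's version is shorter and uniform in $a$; yours costs an appeal to Theorem 2 (whose proof is itself the same character-sum sign flip) but buys more, since the explicit isomorphism shows that all curves with $a\in Q_{p}$ have the same group $E\left( \mathbb{F}_{p}\right) $, not merely the same cardinality, which feeds directly into the paper's classification theme. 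Two minor points: the appeal to Corollary 5 is superfluous, since $0\not\equiv 4\pmod{6}$ already makes your two computed cases exclusive and exhaustive on $\mathbb{F}_{p}^{\ast }=Q_{p}\cup Q_{p}^{\prime }$; and ``sextic-twist classes'' in your closing remark is loose --- within the Bachet family $d=a^{3}$ only two of the six twist classes of $y^{2}=x^{3}+d$ occur, and they are quadratic twists of each other, which is exactly what your Theorem 2 step uses.
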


\begin{proof}
It is well-known that 
\begin{equation*}
N_{p,a}=p+1+\underset{x\in \mathbb{F}_{p}}{\sum }\chi \left(
x^{3}+a^{3}\right)
\end{equation*}%
By putting $p=1+6n$ for $n\in 
%TCIMACRO{\U{2124} }%
%BeginExpansion
\mathbb{Z}
%EndExpansion
,$ we get $N_{p,a}=6n+2+\underset{x\in \mathbb{F}_{p}}{\sum }\chi \left(
x^{3}+a^{3}\right) .$ Now as $\chi \left( a\right) =1,$ and as the set of
the values of $x^{3}$ is the same as the set of the values of $a^{3}x^{3},$
we can write%
\begin{eqnarray*}
\underset{x\in \mathbb{F}_{p}}{\sum }\chi \left( x^{3}+a^{3}\right) &=&%
\underset{x\in \mathbb{F}_{p}}{\sum }\chi \left( a^{3}x^{3}+a^{3}\right) \\
&=&\underset{x\in \mathbb{F}_{p}}{\sum }\chi \left( a^{3}\right) \chi \left(
x^{3}+1\right) \\
&=&\underset{x\in \mathbb{F}_{p}}{\sum }\chi \left( x^{3}+1\right) ,
\end{eqnarray*}%
and by Theorem 12, this sum is congruent to 4 modulo 6. Hence, by putting $%
\underset{x\in \mathbb{F}_{p}}{\sum }\chi \left( x^{3}+a^{3}\right)
=4+6r,~r\in 
%TCIMACRO{\U{2124} }%
%BeginExpansion
\mathbb{Z}
%EndExpansion
,$ we get $N_{p,a}=6n+2+4+6r$ implying $N_{p,a}\equiv 0~\left( \func{mod}%
6\right) .$
\end{proof}

\begin{corollary}
Let $p\equiv 1~\left( \func{mod}6\right) ~$be a prime. If $N_{p,a}\equiv
0~\left( \func{mod}6\right) ,$ then $b\equiv 2~\left( \func{mod}6\right) .$
\end{corollary}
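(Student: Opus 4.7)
The plan is to derive this directly from the defining relation $N_{p,a}=p+1-b$, which is used throughout the paper, combined with the hypothesis on $p$ modulo $6$.

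First I would rewrite the relation as $b=p+1-N_{p,a}$ and reduce this identity modulo $6$. Under the hypothesis $p\equiv 1\pmod 6$, we have $p+1\equiv 2\pmod 6$; and the assumption $N_{p,a}\equiv 0\pmod 6$ kills the remaining term. Substituting,
\begin{equation*}
b \;=\; p+1-N_{p,a}\;\equiv\; 2-0 \;\equiv\; 2 \pmod 6.
\end{equation*}

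That is essentially the whole argument; there is no genuine obstacle, since the statement is a one-line arithmetic consequence of the definition of the Frobenius trace $b$. As a sanity check, this is consistent with Theorem 4, which says that for $p\equiv 1\pmod 6$ the value of $b$ is never divisible by $6$: our congruence gives the sharper statement that when $N_{p,a}\equiv 0\pmod 6$ the residue class of $b$ is forced to be $2$ rather than the other admissible class $4$. It is also consistent with Theorem 7(a)(b): whenever $N_{p,a}\equiv 0\pmod 6$, one reads off $b\equiv 2\pmod 6$, matching the claim.

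Thus the corollary should be presented simply as ``by Theorem 13, $N_{p,a}\equiv 0\pmod 6$, and substituting $p\equiv 1\pmod 6$ into $b=p+1-N_{p,a}$ yields $b\equiv 2\pmod 6$.'' No auxiliary lemma beyond the basic counting identity is required.
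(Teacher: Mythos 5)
Your proof is correct, but it takes a different route from the paper's. The paper does not reduce $b=p+1-N_{p,a}$ modulo $6$ directly; instead it writes $b=-\sum_{x\in\mathbb{F}_{p}}\chi\left(x^{3}+a^{3}\right)$ and appeals to Theorem 12 (the character sum $\sum_{x}\chi\left(x^{3}+1\right)\equiv 4\pmod 6$, transported to $x^{3}+a^{3}$ via the substitution in Theorem 13's proof, which requires $a\in Q_{p}$, equivalent to the hypothesis $N_{p,a}\equiv 0\pmod 6$ by Theorem 13), concluding $b\equiv-4\equiv 2\pmod 6$. Your argument is strictly more elementary: from $b=p+1-N_{p,a}$ you get $b\equiv 2-0\equiv 2\pmod 6$ with no character sums at all, and indeed without using anything specific to Bachet curves --- it works for any curve with $p\equiv 1\pmod 6$ and $N\equiv 0\pmod 6$. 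What the paper's detour buys is a thematic link to the character-sum computation that drives the whole section (Theorems 12 and 13), making explicit that the residue class of $b$ is the negative of the sum's residue class; what your version buys is transparency and generality, exposing the corollary as pure arithmetic bookkeeping on the trace relation. Your consistency checks against Theorem 4 and Theorem 7 are apt, and no step in your argument fails.
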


\begin{proof}
As $N_{p,a}=p+1-b=p+1+\underset{x\in \mathbb{F}_{p}}{\sum }\chi \left(
x^{3}+a^{3}\right) ,$ we know that $b=-\underset{x\in \mathbb{F}_{p}}{\sum }%
\chi \left( x^{3}+a^{3}\right) .$ By Theorem 12, the result follows.
\end{proof}

Similarly we have

\begin{theorem}
Let $p\equiv 1~\left( \func{mod}6\right) ~$be a prime. Then $a\in
Q_{p}^{^{\prime }}$ iff $N\equiv 4~\left( \func{mod}6\right) .$
\end{theorem}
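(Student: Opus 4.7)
The plan is to mirror the proof of Theorem 13 essentially verbatim, tracking only the single sign change that comes from $\chi(a^3)$ when $a$ is a non-residue. The forward direction is a direct character-sum computation; the converse follows formally from Theorem 13 and Corollary 5 without any new work.

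First I would start from the standard point-count formula $N_{p,a}=p+1+\sum_{x\in \mathbb{F}_p}\chi(x^3+a^3)$, write $p=1+6n$, and reduce the sum to the case $a=1$ by the change of variables $x\mapsto ax$. The identity $\sum_{x\in\mathbb{F}_p}\chi(x^3+a^3)=\chi(a^3)\sum_{x\in\mathbb{F}_p}\chi(x^3+1)$ holds exactly as in Theorem 13, because $x\mapsto ax$ is a bijection of $\mathbb{F}_p$. The decisive difference is that now $\chi(a)=-1$, so $\chi(a^3)=\chi(a)^3=-1$, whereas in Theorem 13 we had $\chi(a^3)=+1$.

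Next I would plug in Theorem 12, which gives $\sum_{x\in\mathbb{F}_p}\chi(x^3+1)\equiv 4\pmod 6$. Writing this sum as $4+6r$ for some $r\in\mathbb{Z}$, I obtain
\begin{equation*}
N_{p,a}=6n+2-(4+6r)=6(n-r-1)+4,
\end{equation*}
so $N_{p,a}\equiv 4\pmod 6$, which establishes the implication $a\in Q_p'\Rightarrow N_{p,a}\equiv 4\pmod 6$.

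For the converse, I would not redo any analytic work. By Corollary 5, the only possibilities are $N_{p,a}\equiv 0$ or $N_{p,a}\equiv 4\pmod 6$. If $N_{p,a}\equiv 4\pmod 6$ but $a\in Q_p$, Theorem 13 would force $N_{p,a}\equiv 0\pmod 6$, a contradiction modulo $6$; hence $a\in Q_p'$. I do not anticipate a serious obstacle here; the only thing to be careful about is getting the sign of $\chi(a^3)$ right and invoking Theorem 12 correctly so that the residue class $-4\equiv 2\pmod 6$ combines with the $+2$ from $p+1$ to give exactly $4\pmod 6$ rather than $0$.
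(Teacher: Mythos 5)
Your proposal is correct and is exactly the argument the paper intends: the paper gives no explicit proof here, prefacing the theorem only with ``Similarly we have,'' meaning one repeats the proof of Theorem 13 with the single change $\chi(a^{3})=\chi(a)^{3}=-1$, which turns $6n+2+(4+6r)$ into $6n+2-(4+6r)\equiv 4~(\func{mod}~6)$, precisely your computation. Your converse via the residue/non-residue dichotomy and the forward direction of Theorem 13 is likewise the natural (implicit) completion of the ``iff,'' so there is nothing to flag.
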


\begin{corollary}
Let $p\equiv 1~\left( \func{mod}6\right) ~$be a prime. Let $E$ be the curve
given by $\left( 1\right) .$ Then

a) $a\in Q_{p}$ iff $E\left( \mathbb{F}_{p}\right) $ has $2$ or$\ 8$
elements of order $3.$

b) $a\in Q_{p}^{^{\prime }}$ iff $E\left( \mathbb{F}_{p}\right) $ has no
elements of order $3.$
\end{corollary}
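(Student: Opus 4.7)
The plan is to combine Theorems 9, 13, and 15 with Lagrange's theorem. The work done earlier in the excerpt already establishes the arithmetic residues of $N_{p,a}$ modulo $6$ in terms of the quadratic character of $a$; all that remains is to translate this divisibility information into the presence or absence of elements of order $3$.

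First I would treat part (a). For the forward direction, assume $a\in Q_{p}$. Theorem 13 gives $N_{p,a}\equiv 0~(\operatorname{mod}6)$, so in particular $3\mid N_{p,a}=\lvert E(\mathbb{F}_{p})\rvert$. By Cauchy's theorem $E(\mathbb{F}_{p})$ contains an element of order $3$, so the $3$-torsion subgroup is nontrivial, and Theorem 9 then forces this subgroup to have either $3$ or $9$ elements, i.e.\ $2$ or $8$ elements of order $3$. For the converse, suppose $E(\mathbb{F}_{p})$ contains an element of order $3$. Then $3\mid N_{p,a}$. By Corollary 5, $N_{p,a}$ is congruent to $0$ or $4$ modulo $6$; the residue $4$ is not divisible by $3$, so we must have $N_{p,a}\equiv 0~(\operatorname{mod}6)$, and Theorem 13 yields $a\in Q_{p}$.

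Part (b) is now essentially complementary: since $Q_{p}$ and $Q_{p}'$ partition $\mathbb{F}_{p}^{\ast}$, the biconditional in (a) immediately gives one direction. Alternatively, argue directly from Theorem 15: $a\in Q_{p}'$ is equivalent to $N_{p,a}\equiv 4~(\operatorname{mod}6)$, which forces $3\nmid N_{p,a}$, so by Lagrange $E(\mathbb{F}_{p})$ has no subgroup of order $3$ and therefore no element of order $3$. Conversely, if $E(\mathbb{F}_{p})$ has no element of order $3$, part (a) excludes $a\in Q_{p}$, hence $a\in Q_{p}'$.

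There is no real obstacle here; the statement is a bookkeeping consequence of the two characterizations $a\in Q_{p}\iff N_{p,a}\equiv 0~(\operatorname{mod}6)$ and $a\in Q_{p}'\iff N_{p,a}\equiv 4~(\operatorname{mod}6)$ together with Theorem 9 on the size of the $3$-torsion. The only point deserving a little care is the converse in (a): one must remember to invoke Corollary 5 to rule out residues other than $0$ and $4$ modulo $6$, rather than simply equating "$3\mid N_{p,a}$" with "$6\mid N_{p,a}$".
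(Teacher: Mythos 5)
Your proof is correct, and it reaches the corollary by a slightly different decomposition than the paper, whose entire proof is the one-line citation ``It is clear from Corollary 8 and Theorem 13.'' The paper thus routes the order-$3$ information through Corollary 8, i.e.\ through the modulo-$12$ case analysis (splitting $p\equiv 1$ and $p\equiv 7 \pmod{12}$ and the residues of $t$), combined with Theorem 13. You instead stay entirely modulo $6$: Theorems 13 and 15 give the characterizations $a\in Q_{p}\iff N_{p,a}\equiv 0 \pmod{6}$ and $a\in Q_{p}'\iff N_{p,a}\equiv 4\pmod{6}$, Corollary 5 rules out the other residues, and you supply the group-theoretic glue explicitly: Cauchy's theorem to produce an element of order $3$ when $3\mid N_{p,a}$, Lagrange to exclude one when $3\nmid N_{p,a}$, and Theorem 9 for the count of $2$ or $8$. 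This buys two things. First, it avoids the unnecessary case split over $p$ modulo $12$. Second, it quietly repairs a step the paper leaves dangling: the paper's own proof of Theorem 9 never actually establishes that the $3$-torsion subgroup is nontrivial (it merely asserts the group ``cannot be trivial''), and your Cauchy step is exactly the missing justification. Your closing observation --- that the converse of (a) requires Corollary 5 to distinguish $3\mid N_{p,a}$ from $6\mid N_{p,a}$ --- identifies the one genuinely delicate point in the argument, and you handle it correctly.
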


\begin{proof}
It is clear from Corollary 8 and Theorem 13.
\end{proof}

\section{Bachet Elliptic Curves having a group of the form $C_{n}\times
C_{n}.$}

Now we shall consider the case where the Bachet elliptic curves have a group
isomorphic to $C_{n}\times C_{n}$ for same n. This is only possible when $%
p\equiv 1~\left( \func{mod}6\right) ,~$as otherwise when $p\equiv 5~\left( 
\func{mod}6\right) ,~E\left( \mathbb{F}_{p}\right) $ is isomorphic to the
cyclic group $C_{p+1}.$ We shall consider a result of Washington and refine
it:

\begin{theorem}
$\cite{Washington}~$Let $E$ be an elliptic curve over $\mathbb{F}_{q}$ where
q is a prime power and suppose $E\left( \mathbb{F}_{q}\right) \cong 
%TCIMACRO{\U{2124} }%
%BeginExpansion
\mathbb{Z}
%EndExpansion
_{n}\times 
%TCIMACRO{\U{2124} }%
%BeginExpansion
\mathbb{Z}
%EndExpansion
_{n}$ for some integer n. Then either $q=n^{2}+1,~q=n^{2}\mp n+1\ $or $%
q=\left( n\mp 1\right) ^{2}.$
\end{theorem}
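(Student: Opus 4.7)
The plan is to exploit the Frobenius endomorphism $\pi$ together with the hypothesis that the full $n$-torsion of $E$ is $\mathbb{F}_q$-rational. Since $|E(\mathbb{F}_q)|=n^2$, necessarily $\gcd(n,\operatorname{char}\mathbb{F}_q)=1$, so $E[n]\cong(\mathbb{Z}/n)^{2}$ has order $n^{2}$. Combined with $E(\mathbb{F}_q)\subseteq E[n]$, this forces $E(\mathbb{F}_q)=E[n]$, so $\pi$ acts as the identity on $E[n]$; equivalently, $\pi-1$ vanishes on $\ker[n]$.

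Next, since $[n]$ is the universal isogeny with kernel $E[n]$, I would factor $\pi-1=[n]\circ\phi$ for a unique $\phi\in\mathrm{End}(E)$. Taking degrees and using that $\pi-1$ is separable (its differential equals $-1$), one gets $n^{2}=\deg(\pi-1)=n^{2}\deg(\phi)$, so $\deg(\phi)=1$; that is, $\phi$ is an automorphism of $E$. Substituting $\pi=1+[n]\phi$ into the norm relation $\pi\bar\pi=q$ and using $\phi\bar\phi=1$, $\phi+\bar\phi=\mathrm{tr}(\phi)$, I obtain
\begin{equation*}
q \;=\; 1 + n\,\mathrm{tr}(\phi) + n^{2} .
\end{equation*}

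To conclude, I would pin down the possible values of $\mathrm{tr}(\phi)$. Since $\phi$ is a unit of $\mathrm{End}(E)$ satisfying $X^{2}-\mathrm{tr}(\phi)X+1=0$ inside an imaginary quadratic order (or, in the supersingular case, a quaternion order) whose finite-order units are roots of unity, its discriminant $\mathrm{tr}(\phi)^{2}-4$ must be non-positive, forcing $\mathrm{tr}(\phi)\in\{0,\pm 1,\pm 2\}$. This matches the known classification of $\mathrm{Aut}(E)$ in characteristic $>3$, where $\phi$ is a root of unity of order $1,2,3,4$, or $6$. Substituting each admissible value into the displayed formula yields $q\in\{n^{2}+1,\; n^{2}\pm n+1,\;(n\pm 1)^{2}\}$, exactly matching the statement. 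The step I expect to require the most care is the factorization $\pi-1=[n]\phi$ inside $\mathrm{End}(E)$, especially in the supersingular case where the endomorphism ring is non-commutative; once $\phi$ is identified as an automorphism, everything that follows is a short trace-norm bookkeeping exercise.
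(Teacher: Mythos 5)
The paper itself does not prove this statement: it is quoted from Washington's book \cite{Washington} as Theorem 17 and used as a black box for Theorem 18 (the refinement the paper actually proves), so there is no in-paper proof to compare yours against. Assessed on its own, your argument is correct and is a legitimate version of the standard proof. The opening chain is sound: $E(\mathbb{F}_q)\subseteq E[n]$ since the group has exponent $n$, and since $\#E[n]<n^2$ whenever $\operatorname{char}\mathbb{F}_q\mid n$, the hypothesis $\#E(\mathbb{F}_q)=n^2$ forces $\gcd(n,q)=1$ and $E(\mathbb{F}_q)=E[n]$, so Frobenius fixes $E[n]$ pointwise. The factorization $\pi-1=[n]\circ\phi$ is valid: $\pi-1$ is a nonzero isogeny (as $\deg\pi=q>1$), $[n]$ is separable because $p\nmid n$, and $\ker[n]\subseteq\ker(\pi-1)$, so the standard factorization lemma (Silverman III.4.11) applies; your worry about the supersingular case is unfounded, since that lemma is insensitive to whether $\operatorname{End}(E)$ is commutative, and $[n]$ is central in any case. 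Separability of $\pi-1$ gives $\deg(\pi-1)=\#E(\mathbb{F}_q)=n^2$, hence $\deg\phi=1$, and the norm computation $q=1+n\operatorname{tr}(\phi)+n^2$ is correct. The only spot where you hand-wave is the bound $\operatorname{tr}(\phi)\in\{0,\pm1,\pm2\}$: the cleanest justification is not a discussion of units in quadratic or quaternion orders but simply that the degree map is a positive definite quadratic form on $\operatorname{End}(E)$, which yields $\operatorname{tr}(\alpha)^2\leq 4\deg\alpha$ for every endomorphism $\alpha$; with $\deg\phi=1$ this gives exactly your list, and the five values of $\operatorname{tr}(\phi)$ produce $q=n^2+1$, $q=n^2\pm n+1$, $q=(n\pm1)^2$ as claimed. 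For comparison, the derivation in Washington's book is more elementary: from $\pi=\mathrm{id}$ on $E[n]$ one reads off $q\equiv 1$ and $a\equiv 2\pmod{n}$ ($a$ the trace of Frobenius, via its matrix on $E[n]$), writes $a=2+tn$ so that $q=n^2+tn+1$, and Hasse's inequality $a^2\leq 4q$ forces $t^2\leq 4$ --- the same five cases, with the positive definiteness hidden inside Hasse. Your route costs the factorization machinery in $\operatorname{End}(E)$ but buys slightly more, since it exhibits the automorphism $\phi$ explicitly and thereby ties each value of $q$ to extra automorphisms (hence $j=0$ or $j=1728$ in the middle cases), which is in fact consonant with the Bachet curves $y^2=x^3+a^3$ studied in this paper.
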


Now we give a more specific result for Bachet elliptic curves given by $%
\left( 1\right) $ over $\mathbb{F}_{q}:$

\begin{theorem}
Let $E$ be the elliptic curve in $\left( 1\right) .$ Suppose 
\begin{equation*}
E\left( \mathbb{F}_{p}\right) \cong 
%TCIMACRO{\U{2124} }%
%BeginExpansion
\mathbb{Z}
%EndExpansion
_{n}\times 
%TCIMACRO{\U{2124} }%
%BeginExpansion
\mathbb{Z}
%EndExpansion
_{n}.
\end{equation*}%
Then $p\equiv 7~\left( \func{mod}12\right) $ and $p=n^{2}\mp n+1.$
\end{theorem}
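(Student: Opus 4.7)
The plan is to invoke Washington's theorem (the immediately preceding result) and then combine it with the Bachet-specific divisibility data the paper has just developed. Washington gives the five possibilities $p\in\{n^2+1,\,n^2\pm n+1,\,(n\pm 1)^2\}$; since $p$ is prime, the two square cases $(n\pm1)^2$ are immediately impossible, because a nontrivial perfect square is composite. The Frobenius trace $b=p+1-n^2$ then takes the values $2$, $n+2$, and $2-n$ in the three remaining cases, respectively.

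Next I would extract divisibility constraints on $n$. The full $2$-torsion of $\mathbb{Z}_n\times\mathbb{Z}_n$ is present, and the Bachet curve's three order-$2$ points (guaranteed by the earlier theorem on cube roots of $-1$ modulo $p$) confirm this and in particular force $2\mid n$. The $3$-torsion subgroup of $\mathbb{Z}_n\times\mathbb{Z}_n$ has order $\gcd(n,3)^2$, so the group has either $0$ or $8$ points of order $3$; matching this with the corollary that identifies $a\in Q_p$ with the existence of $2$ or $8$ order-$3$ points yields the dichotomy $3\mid n \Longleftrightarrow a\in Q_p$. Combined with the sign convention on $b$, this pairs each candidate value of $b$ with a precise condition: $b=2$ and $b=n+2$ demand $6\mid n$, while $b=2-n$ demands $2\mid n$ and $3\nmid n$.

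The decisive step is to confront the three candidate values of $b$ with Corollary~6, namely $b\equiv\pm2\pmod{12}$ when $p\equiv1\pmod{12}$ and $b\equiv\pm4\pmod{12}$ when $p\equiv7\pmod{12}$. Running this case analysis together with the divisibilities above is meant to rule out $p=n^2+1$ and force the other two cases into $p\equiv7\pmod{12}$. I expect the real obstacle to be the clean elimination of $p=n^2+1$: the mod-$12$ bookkeeping alone appears to admit $b=2$ together with $p\equiv1\pmod{12}$ and $6\mid n$, so a further Bachet-specific ingredient seems required. The most natural such ingredient is the complex multiplication of Bachet curves by $\mathbb{Z}[\omega]$: the Frobenius $\pi$ lies in $\mathbb{Z}[\omega]$, and the hypothesis $E(\mathbb{F}_p)\cong\mathbb{Z}_n\times\mathbb{Z}_n$ forces $\pi=1+nu$ for some unit $u$ of $\mathbb{Z}[\omega]$; the six units $\pm 1,\pm\omega,\pm\omega^2$ yield under $p=N(\pi)$ only $p=(n\pm1)^2$ or $p=n^2\pm n+1$, so $p=n^2+1$ is excluded outright, and then matching the permissible units $\pm\omega,\pm\omega^2$ against the parity/residue of $n$ dictated by Corollary~6 pins $p$ to the class $7\pmod{12}$.
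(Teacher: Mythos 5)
Your opening matches the paper's: invoke Washington's theorem and discard $(n\pm 1)^{2}$ by primality. On the elimination of $p=n^{2}+1$ you in fact do \emph{better} than the paper. The paper's argument only shows that $p=n^{2}+1$ would force $-1\in Q_{p}$, hence $p\equiv 1\pmod{12}$ (given $p\equiv 1\pmod 6$) --- which contradicts nothing previously established; "therefore the result follows" is a non sequitur that silently assumes the congruence $p\equiv 7\pmod{12}$ it is trying to prove. Your CM argument closes this hole rigorously: $\pi\in\mathbb{Z}[\omega]$, the hypothesis $E(\mathbb{F}_{p})\cong\mathbb{Z}_{n}\times\mathbb{Z}_{n}$ gives $\pi\equiv 1\pmod n$ in $\operatorname{End}(E)=\mathbb{Z}[\omega]$, and $N(\pi-1)=p+1-b=n^{2}$ makes $u=(\pi-1)/n$ a unit; the six units yield only $p=(n\pm1)^{2}$ or $p=n^{2}\pm n+1$, so $p=n^{2}+1$ (which would need the unit $\pm i\notin\mathbb{Z}[\omega]$) is excluded outright.

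The genuine failure is your last step: no matching of the units $\pm\omega,\pm\omega^{2}$ against Corollary~6 can pin $p$ to the class $7\pmod{12}$, because that congruence is \emph{false} --- and consequently the paper never proves it either. Take $u=\omega$, $n=4$: $\pi=1+4\omega$ has norm $13=4^{2}-4+1\equiv 1\pmod{12}$, and this is realized by a Bachet curve: $E\colon y^{2}=x^{3}+7^{3}\equiv x^{3}+5\pmod{13}$ has $N_{13,7}=16$, full two-torsion at $x=2,5,6$, and each of the twelve points with $y\neq 0$ doubles into the two-torsion (e.g.\ $2(4,2)=(6,0)$, $2(7,6)=(2,0)$), so $E(\mathbb{F}_{13})\cong\mathbb{Z}_{4}\times\mathbb{Z}_{4}$. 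Here $b=-2\equiv -2\pmod{12}$ with $a=7\in Q_{13}^{\prime}$, fully consistent with Corollary~6 and with Theorems~13--15, so the paper's mod-$12$ machinery cannot exclude it. Your intermediate "pairings" are also incorrect as stated: $b=n+2$ does not demand $6\mid n$ (for $p=7$, $n=2$, the curve $y^{2}=x^{3}+6\equiv x^{3}+3^{3}\pmod 7$ has exactly the four points $o,(1,0),(2,0),(4,0)$, i.e.\ $\mathbb{Z}_{2}\times\mathbb{Z}_{2}$ with $b=4=n+2$), and $b=2-n$ is compatible with $6\mid n$ (e.g.\ $n=6$, $p=31$, $b=-4$). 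The salvageable statement --- which your unit computation actually proves, unlike the paper's own proof --- is $p=n^{2}\mp n+1$ alone; both residues $1$ and $7\pmod{12}$ then occur, as $p=13$ and $p=7$ show.
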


\begin{proof}
By Theorem 17, there are three possibilities $p=n^{2}+1,~p=n^{2}\mp n+1~$or $%
p=n^{2}\mp 2n+1.$ The latter one is immediately rules out as $p$ cannot be a
square. We need only show that $p$ cannot be equal to $n^{2}+1.$

If $p=n^{2}+1,~$than $n^{2}=p-1$ and hence $p-1$ is in $Q_{p}.$ But it is
known that $p-1$ could be in $Q_{p}$ only when $p\equiv 1,5~\left( \func{mod}%
12\right) $ is prime. Therefore the result follows.
\end{proof}

\begin{tabular}{l}
Musa Demirci, G\"{o}khan Soydan, \.{I}smail Naci Cang\"{u}l \\ 
Department of Mathematics \\ 
Uluda\u{g} University \\ 
16059 Bursa, TURKEY \\ 
mdemirci@uludag.edu.tr, gsoydan@uludag.edu.tr, cangul@uludag.edu.tr%
\end{tabular}

\begin{tabular}{l}
Nazl\i\ Y\i ld\i z \.{I}kikarde\c{s} \\ 
Department of Mathematics \\ 
Bal\i kesir University \\ 
Bal\i kesir, TURKEY \\ 
nyildiz@balikesir.edu.tr%
\end{tabular}

\end{document}